\DeclareMathAlphabet{\mathpzc}{OT1}{pzc}{m}{it}
\newtheorem{thm}{Theorem}[section]
\newtheorem{lem}[thm]{Lemma}
\newtheorem{prop}[thm]{Proposition}
\newtheorem{prob}[thm]{Problem}
\newtheorem{cor}[thm]{Corollary}
\newdefinition{defn}[thm]{Definition}
\newdefinition{ex}[thm]{Example}
\newdefinition{rem}[thm]{Remark}
\newdefinition{note}{Note}
\newdefinition{q}{Question}
\newcommand{\comment}[1]{}
\newcommand\p {\mathfrak{p}}
\newcommand{\A}[1]{\mathbb{A}^{#1}}
\newcommand{\ol}[1] {\overline{#1}}
\newcommand{\nothookrightarrow}{ \hookrightarrow \hspace*{-5mm} / \hspace*{2.5mm}}
\begin{document}
\begin{frontmatter}
\title{On cancellation of variables of the form $bT^n-a$\\ over affine normal domains}

\author{Prosenjit Das}
\address{Department of Mathematics, Indian Institute of Space Science and Technology, \\
Valiamala P.O., Trivandrum 695 547, India\\
email: \texttt{prosenjit.das@gmail.com, prosenjit.das@iist.ac.in}}
%\author{Amartya K. Dutta}
%\address{Stat-Math Unit, Indian Statistical Institute,\\
%203, B.T. Road, Kolkata 700 108, India\\
%email: \texttt{amartya@isical.ac.in}}
%
\begin{abstract}
In this article we extend a cancellation theorem of D. Wright to the case of affine normal domains. We shall show that if $A$ is an algebra over a Noetherian normal domain $R$ containing a field $k$ and if  $A[T] = R^{[3]}$, then $A = R^{[2]}$ if and only if $A[T]$ has a variable of the form $bT^n-a$ for some $a, b \in A$ with $n \ge 2$ and ch$(k) \nmid n$.

\noindent
{\scriptsize Keywords: Cancellation problem; Residual variable; Affine fibration.}\\
{\scriptsize {\bf AMS Subject classifications (2010)}: 13B25, 13F20, 13N05, 14R10, 14R25.}
\end{abstract}
\end{frontmatter}

\section{Introduction} \label{sec_intro}
Throughout the article rings will be commutative with unity. For a ring $R$, $R^{[n]}$ will denote the polynomial ring in $n$ variables over $R$. We shall use the notation $A = R^{[n]}$ to mean that $A$ is isomorphic, as an $R$-algebra, to a polynomial ring in $n$ variables over $R$. For a prime ideal $P$ of $R$, $k(P)$ will denote the residue field $R_P/PR_P$; and for an $R$-algebra $A$, $A_P$ will denote the localisation $S^{-1}A$ where $S = R \backslash P$.\\%$K$ will denote the field of fractions of an integral domain $R$; and by $(R,\pi)$ we shall denote a discrete valuation ring with uniformizing parameter $\pi$.\\

Consider the following cancellation problem.

\begin{prob} \label{can_prob}
Let $R$ be a ring, $A$ an $R$-algebra and $A[T] = A^{[1]}$. Suppose, $A[T] = R^{[3]}$. Is then $A = R^{[2]}$?
\end{prob}

While the problem is open in general, it is well known that the contributions of Miyanishi-Sugie (\cite{Miyanishi_cyllinder}) and Fujita (\cite{Fujita_Zariski-Problem}) give affirmative solution to the problem for the case $R$ is a field of characteristic zero; then Russell (\cite{Russell_Cancellation}) gave affirmative solution when $R$ is a perfect field of arbitrary characteristic; and recently Bhatwadekar-Gupta \cite{Neena-Bhatwadekar_Epi} showed that the same holds even when $R$ is a non-perfect field. When $R$ is PID containing a field of characteristic zero, the results on $\mathbb{A}^2$-fibration by Sathaye (\cite{Sat_Pol-two-var-DVR}), along with a result of Bass-Connell-Wright (\cite{BCR_Local-Poly}), show that $A$, indeed, is a polynomial ring in two variables. Asanuma-Bhatwadekar's structure theorem on $\mathbb{A}^2$-fibration shows the same conclusion when $R$ is a one dimensional Noetherian local domain containing $\mathbb{Q}$ such that $\Omega_R(A)$ is extended from $R$ (\cite{Asan-Bhatw_Struct-A2-fib}, Corollary 3.9); in particular, Problem \ref{can_prob} has an affirmative solution when $R$ is a one dimensional Noetherian seminormal local domain containing $\mathbb{Q}$ (\cite{Asan-Bhatw_Struct-A2-fib}, Remark 3.10); also see (\cite{Derksen_cancellation} and \cite{Kahoui_Cancellation}). However, even when $R$ is a PID (but $\mathbb{Q} \nothookrightarrow R$), Problem \ref{can_prob} does not have an affirmative answer by an example of Asanuma (\cite{Asanuma_fibre_ring}, Theorem 5.1). We present below a generalised version of the example due to Neena Gupta (\cite{Neena_Threefold}, Lemma 3.2, Theorem  4.2, Theorem 4.3).
\begin{ex} \label{asa_example}
Let $k$ be a field of characteristic $p \ne 0$ and $R=k[\pi] = k^{[1]}$. Set $\displaystyle A := \frac{R[X,Y,Z]}{(\pi^m Z -f(X,Y))}$ where $m$ is a positive integer and $f(X,Y) \in k[X,Y] = k^{[2]}$ be such that $k[X,Y]/(f(X,Y)) = k[\overline{h(X,Y)}] = k^{[1]}$, $\overline{h(X,Y)}$ being image of some $h(X,Y) \in k[X,Y]$ and $k[X,Y] \ne k[f(X,Y)]^{[1]}$. Then $A \otimes_R k(P) = k(P)^{[2]}$ for all $P \in$ Spec$(R)$ and $A[T] = R[\pi^m T - h(X,Y)]^{[2]} = R^{[3]}$, but $A \ne R^{[2]}$.
\end{ex}
 
Naturally, one may ask that under what conditions a positive answer to Problem \ref{can_prob} can be expected over general rings. It is important to note that if there exists an element $F$ in $A[T]\backslash A$ satisfying $A[T]= R[F]^{[2]}$ so that $B:=A[T]/(F) = R^{[2]}$ becomes a simple extension of $A$, then one may try to construct variables of $A$ from judiciously chosen variables of $B$. So, corresponding to Problem \ref{can_prob}, the following epimorphism problem can be considered.

\begin{prob} \label{can-epi_prob}
Let $A$ be a finitely generated algebra over a ring $R$ and $A[T] = A^{[1]}$. Suppose, there exists $F \in A[T] \backslash A$ such that $B := A[T] / (F) = R^{[2]}$. Then 
\begin{enumerate}
\item [\rm (i)] Is $A = R^{[2]}$?
\item [\rm (ii)] Is $A[T] = R[F]^{[2]}$? 
\end{enumerate}

\end{prob}

When $R$ is a field and $F = bT^n-a$, where $a, b \in A$, positive answers have been given by Peter Russell \cite{R_BIR} and David Wright \cite{DW_CNCL} for the case $n=1$ and $n \ge 2$ respectively, under certain assumptions on $A$ (also see \cite{Sathaye_OLP} and \cite{R_GAL}). We quote below the precise statement of D. Wright.

\begin{thm}\label{david_cncl}
Let $k$ be an algebraically closed field of characteristic $p \ge 0$ and $A$ a normal affine $k$-domain. Let $a,b \in A$, $b \ne 0$, and suppose that $B = A[T]/(bT^n -a) = k^{[2]}$, where $n \ge 2$ is an integer, not divisible by $p$. Then there are variables $X, Y$ of $B$ such that $Y$ is the image of $T$ in $B$, $b \in k[X]$, $a=bY^n$, and $A=k[X,a]=k^{[2]}$. Moreover, $A[T] = k[X, bT^n-a, T] = k[bT^n-a]^{[2]}$.
\end{thm}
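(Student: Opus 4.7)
The plan is to exploit the Kummer structure encoded in the relation $bY^n=a$ (where $Y$ denotes the image of $T$ in $B$) and transport it to the polynomial ring $B=k^{[2]}$ using the hypothesis $\mathrm{ch}(k)\nmid n$. First I would set up the Galois action: since $b\neq 0$, the ring $B\otimes_A \mathrm{Frac}(A)=\mathrm{Frac}(A)[T]/(T^n-a/b)$ must be a domain because $B$ is, hence $T^n-a/b$ is irreducible over $\mathrm{Frac}(A)$ and $[\mathrm{Frac}(B):\mathrm{Frac}(A)]=n$. Since $k$ is algebraically closed with $\mathrm{ch}(k)\nmid n$, $\mathrm{Frac}(B)/\mathrm{Frac}(A)$ is a cyclic Kummer extension whose Galois group $\mu_n\subset k^\ast$ acts by $Y\mapsto \zeta Y$, and this restricts to a $k$-algebra action of $\mu_n$ on $B=A[Y]$ fixing $A$ pointwise.

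Next, because $B=k^{[2]}$ and the $\mu_n$-action is tame, I would invoke a linearisation theorem for finite group actions on $\mathbb{A}^2_k$ to choose variables $X,Z$ of $B$ on which $\mu_n$ acts diagonally, with $X$ of weight $0$ and $Z$ of weight $1$ (faithfulness of the Galois action prevents both weights from vanishing, and after a change of variables we may place the non-trivial weight on $Z$). Then $B^{\mu_n}=k[X,Z^n]$ and the weight-$1$ isotypic component of $B$ equals $Z\cdot k[X,Z^n]$; since $Y$ has weight $1$, one has $Y=Z\,g(X,Z^n)$ for a unique $g\in k[X,Z^n]$, so the relation $bY^n=a$ becomes $a=bZ^n g^n$ inside the UFD $k[X,Z^n]$.

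The heart of the argument is then to compare prime factorisations of $a=bZ^n g^n$ in $k[X,Z^n]$. Using that $a,b$ both lie in $A\subseteq B^{\mu_n}=k[X,Z^n]$ and that $a/b=Z^n g^n$ is (up to a unit) an $n$-th power times $Z^n$, I would conclude that $g\in k^\ast$ (so that $Y$, after rescaling, is the variable $Z$ of $B$) and that $b$ has no $Z^n$-dependence, i.e.\ $b\in k[X]$. After rescaling $Z$ to $Y$, we have $B=k[X,Y]$ with $b\in k[X]$ and $a=bY^n$; any element of $A\subseteq k[X,Y^n]$ is then a polynomial in $X$ and $Y^n=a/b$, and normality of $A$ forces $A=k[X,a]\cong k^{[2]}$. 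The moreover statement follows by the direct substitution $k[X,a,T]=k[X,bT^n-a,T]$ together with the algebraic independence of $X$ and $bT^n-a$ over $k$, which holds because $A=k[X,a]=k^{[2]}$.

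The main obstacle I anticipate is the UFD factorisation step: extracting $g\in k^\ast$ and $b\in k[X]$ from the identity $a=bZ^n g^n$ requires not merely the $\mu_n$-invariance of $a$ and $b$ but the stronger constraint that both lie in the specific normal affine subring $A$ of $B^{\mu_n}$; I expect this to hinge on a careful degree or induction argument controlling the $Z^n$-dependence of $a$ and $b$, combined with the fact that $A$ is itself a two-dimensional normal affine $k$-domain.
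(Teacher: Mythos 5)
Your overall strategy --- set up the $\mu_n$-Kummer action on $B=A[Y]$ with $Y^n=a/b$, diagonalise it on $B=k^{[2]}$ using a linearisation result for tame cyclic actions on the plane (this is Lemma \ref{our_auto-lem} in the paper), and identify $Y$ up to a scalar with the non-invariant variable --- is exactly the paper's route in Case~I of the proof of Theorem \ref{our_field-th}, which is the algebraically closed case of the statement. Up to and including the identification $B^{\sigma}=A[a/b]=k[X,Z^n]$ and ``$Y$ is a unit multiple of $Z$'', your outline is sound (the paper gets the unit multiple by comparing the two free module decompositions $B=\bigoplus t^iC=\bigoplus V'^iC$ over $C=B^{\sigma}$, not from the factorisation of $a$, but that is a minor difference).

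The genuine gap is precisely the step you flag as the ``main obstacle'', and the route you sketch for it cannot work. Once $g\in k^{\ast}$ is known, the identity $a=bZ^ng^n$ collapses to $a=(\text{unit})\,bZ^n$, which imposes no constraint on $b$ beyond $b\in B^{\mu_n}$; and the weight-zero variable $X$ handed to you by the linearisation theorem is highly non-unique (one may replace $X$ by $X+(Z^n)^2$, say, without disturbing any of your hypotheses), so there is no hope of proving $b\in k[X]$ for \emph{that} $X$ by comparing factorisations. The variable $X$ must be \emph{constructed} from $b$. The paper does this by taking the distinct irreducible factors $p_1,\dots,p_m$ of $b$ in $C=A[a/b]=k^{[2]}$, showing $\mathfrak{p}_i=A\cap p_iC$ is a maximal ideal of $A$ with $p_iC=\mathfrak{p}_iC$ (so the $p_i$ are pairwise comaximal), showing each $p_i$ is an associate of $U'-g_i(a/b)$ and hence a variable of $C$ and of $B$, and then invoking Sathaye's lemma on comaximal variables (Lemma \ref{sat_lem-var}) to write $p_i=\lambda_iX+\pi_i$ with $X:=p_1$; normality of $A$ is then used to descend $X$ (which is integral over $A$ and lies in $A[a/b]$) into $A$. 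Your final step ``normality of $A$ forces $A=k[X,a]$'' is also too quick: the containment $k[X,a]\subseteq A\subseteq k[X,a/b]$ does not by itself give equality, and the paper needs the separate conductor-type computation $bA\cap k[X,a]=bk[X,a]$ (resting on the fact that no height-one prime of $A$ contains both $a$ and $b$, together with $A=\bigcap_{\operatorname{ht}\mathfrak{p}=1}A_{\mathfrak{p}}$) before applying the ring-equality criterion of Lemma \ref{Lem_ring-equality}. Without these two arguments the proof is incomplete at its central point.
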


Under the hypothesis $A = R^{[2]}$, Das-Dutta in \cite{DD_plane} showed that Wright's epimorphism result extends to more general rings, thereby answering (ii) of Problem \ref{can-epi_prob} in the case of such rings for $F = bT^n-a$. We quote below one of the results.

\begin{thm} \label{DD_plane-thm-F}
Let $R$ be a Noetherian normal domain containing a field of characteristic $p \ge 0$, $A = R[X,Y] = R^{[2]}$ and $F \in A[T](=R[X,Y,T]=R^{[3]})$ be of the form $bT^n-a$ where $a,b \in R[X, Y]$, $b \ne 0$ and $n$ is an integer $ \ge 2$ with $p \nmid n$. Suppose that $R[X,Y,T]/(F) = R^{[2]}$. Then $R[X, Y, T] = R[F, T]^{[1]}$ and $R[X, Y] = R[a]^{[1]}$.
\end{thm}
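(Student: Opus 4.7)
The plan is to reduce the theorem fibre-by-fibre over $R$ using Wright's theorem (Theorem \ref{david_cncl}) and then descend back to $R$ via its normality. The explicit shape $F = b T^n - a$ with $p \nmid n$ makes Wright directly applicable on each fibre where $b$ survives.

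First I would pass to the generic fibre $K = \operatorname{Frac}(R)$, where $K[X,Y,T]/(F) = K^{[2]}$ and $b \ne 0$. After further base-change to the algebraic closure $\bar K$, Theorem \ref{david_cncl} produces variables $X_0, Y_0 \in \bar K[X,Y]$ with $b \in \bar K[X_0]$, $a = b Y_0^n$, and $\bar K[X,Y] = \bar K[X_0, a]$. A standard Galois-descent argument for polynomial-ring structures returns this to $K$, yielding $K[X,Y] = K[a]^{[1]}$ and $K[X,Y,T] = K[F, T]^{[1]}$. I would similarly analyse each residue fibre $k(P) := R_P/PR_P$: when $b \bmod P \ne 0$, Wright applies directly after passage to $\overline{k(P)}$; when it vanishes, the relation $F \equiv -a \pmod{P}$ together with $B \otimes_R k(P) = k(P)^{[2]}$ forces $k(P)[X,Y]/(a) = k(P)^{[1]}$, so $a$ is at least a residual variable of $k(P)[X,Y]$ on every fibre.

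Next I would descend these fibrewise conclusions to $R$. Using that $R$ is Noetherian normal, hence $R = \bigcap_{\operatorname{ht} P = 1} R_P$, and combining with a Bhatwadekar--Dutta type structure result that promotes uniform fibrewise residual-variable behaviour over a normal base to an honest variable of $R[X,Y]$, one obtains $R[X,Y] = R[a]^{[1]}$. Picking a complementary variable $c$ gives $R[X,Y] = R[a, c]$; then $b \in R[a,c]$, and substituting $a = b T^n - F$ inside $R[X,Y,T]$ shows that $\{c, F, T\}$ generates $R[X,Y,T]$ freely over $R$, giving $R[X,Y,T] = R[F, T]^{[1]}$ as required.

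The main obstacle is the descent step: fibrewise residual-variable behaviour is strictly weaker than being a variable (Nagata- and Asanuma-type examples illustrate the failure in positive characteristic), so the global hypothesis $B = R^{[2]}$, the normality of $R$, and the specific Kummer structure of $F$ with $p \nmid n$ must all be used jointly to force the lift. The primes $P$ at which $b \bmod P = 0$ are the most delicate, since Wright is unavailable on those fibres and one must deduce the residual-variable property of $a \bmod P$ indirectly from the global isomorphism $B = R^{[2]}$ rather than from Theorem \ref{david_cncl} applied in place.
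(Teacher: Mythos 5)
First, a point of context: the paper does not prove Theorem \ref{DD_plane-thm-F} at all --- it is quoted verbatim from \cite{DD_plane} (``We quote below one of the results''). What the paper does prove are the analogues Proposition \ref{our_UFD-normal-th}, Theorem \ref{our_cor_UFD_cancel} and Corollary \ref{A2-fib-cor}, and their common method --- fibrewise reduction to the field case followed by the residual-variable theorem (Theorem \ref{DD_res_lem-1}) --- is indeed the strategy you propose. So your outline is aligned with the paper's methodology, but it contains a genuine gap precisely where Theorem \ref{DD_plane-thm-F} is harder than those analogues: the primes $P$ at which $\bar b = 0$ in $k(P)[X,Y]$, in positive characteristic. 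There your hypothesis only yields $\bigl(k(P)[X,Y]/(\bar a)\bigr)^{[1]} = k(P)^{[2]}$, hence (by Abhyankar--Eakin--Heinzer) $k(P)[X,Y]/(\bar a) = k(P)^{[1]}$; but a line is not a variable when $\mathrm{ch}\,k(P) = p > 0$ (Segre--Nagata lines), so this does \emph{not} establish that $\bar a$ is a residual variable at such $P$, which is what Theorem \ref{DD_res_lem-1} needs. You flag this as ``the most delicate'' point but supply no argument. Note how the paper's own results each dodge exactly this: Proposition \ref{our_UFD-normal-th} assumes $b \notin PA_P$ for all $P$; Corollary \ref{A2-fib-cor} assumes $\mathbb{Q} \subseteq R$ so that Abhyankar--Moh--Suzuki applies; and Theorem \ref{our_cor_UFD_cancel} has the stronger hypothesis $A[T] = R[bT^n-a]^{[2]}$, which at such $P$ gives $(A \otimes_R k(P))^{[1]} = (k(P)[\bar a])^{[2]}$ and lets AEH finish. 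None of these escapes is available under the hypotheses of Theorem \ref{DD_plane-thm-F}, so the bad fibres require a genuinely new argument (this is the content of \cite{DD_plane}, Theorem 6.2, cf.\ Remark (IV) after Proposition \ref{our_UFD-normal-th}).

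Two further, smaller problems. Your deduction of $R[X,Y,T] = R[F,T]^{[1]}$ by ``substituting $a = bT^n - F$'' after choosing a complementary variable $c$ requires $b \in R[c]$ globally, not merely $b \in R[a,c]$; the fibrewise argument gives $b \in k(P)[X_P]$ on each fibre (and, per Remark (I), $b \in R[X']$ with $A \otimes_R K = K[X',a]$ only over the fraction field when $R$ is factorial), so no single global $c$ with $b \in R[c]$ is produced. The paper's route avoids this by showing $(bT^n - a,\, T)$ is a \emph{pair} of residual variables of $A[T]$ and applying Theorem \ref{DD_res_lem-1} to the pair. Finally, your per-fibre descent ``from $\overline{k(P)}$ by standard Galois descent'' is not available when $k(P)$ is imperfect; the correct move is to invoke Theorem \ref{our_field-th} directly (valid over any field), whose proof adjoins only the $n$-th roots of unity --- a separable extension because $p \nmid n$ --- and descends via Theorem \ref{dut_sep-a1} and Lemma \ref{our_rs-lem}.
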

In this article, we shall use some recent results on residual variables of affine fibrations by Das-Dutta (\cite{DD_residual}) to show that the above epimorphism result can be generalized to the case $A$ is an $\A{2}$-fibration over $R$ with $\Omega_R (A)$ a stably free $A$-module, thereby getting a partial answer to (i) of Problem \ref{can-epi_prob} for the above mentioned $R$ and $F$. More generally, we shall show the following (Proposition \ref{our_UFD-normal-th}):

\medskip

\noindent
{\bf Proposition A.}
Let $R$ be a Noetherian normal domain containing a field of characteristic $p \ge 0$ and $A$ a finitely generated flat $R$-domain with $\Omega_R(A)$ a stably free $A$-module. Suppose there exist $a,b \in A$ satisfying $\displaystyle{\frac{A[T]}{(bT^n-a)} = R^{[2]}}$, where $n \ge 2$ and $p \nmid n$, and that, for each prime ideal $P$ of $R$, $A \otimes_R k(P)$ is normal and $b \notin PA_P$. Then $A = R[a]^{[1]} = R^{[2]}$ and $A[T] = R[bT^n-a, T]^{[1]} = R^{[3]}$. When $R$ is a factorial domain, the hypothesis ``$\Omega_R(A)$ is stably free'' may be dropped.\\

We shall also see that Problem \ref{can_prob} has an affirmative answer over a Noetherian normal domain $R$ if $A[T]$ has a variable of the form $bT^n-a$; more precisely (Theorem \ref{our_cor_UFD_cancel}):

\medskip

\noindent
{\bf Theorem B.}
Let $R$ be a Noetherian normal domain containing a field of characteristic $p \ge 0$ and $A$ an $R$-algebra. Suppose there exist $a,b \in A$ such that $A[T] = R[bT^n-a]^{[2]} = R^{[3]}$, where $n \ge 2$ and $p \nmid n$. Then $A = R[a]^{[1]} = R^{[2]}$ and $A[T] = R[bT^n-a, T]^{[1]}$.\\

An important question on affine fibration is whether every $\A{2}$-fibration is a polynomial ring in two variables over the base ring. In \cite{Sat_Pol-two-var-DVR}, A. Sathaye showed that an $\A{2}$-fibration $A$ over a base ring $R$ is trivial if $R$ is a DVR containing $\mathbb{Q}$. Asanuma's example (\cite{Asanuma_fibre_ring}, Theorem 5.1) shows that non-trivial $\A{2}$-fibrations exist over a DVR containing a field of positive characteristic. But it is not known whether every $\A{2}$-fibration over a two dimensional regular affine spot containing $\mathbb{Q}$ is trivial. In this article we shall observe that an $\A{2}$-fibration $A$ over a Noetherian factorial domain containing $\mathbb{Q}$ is trivial if there exist $a$, $b$ in $A$ such that the fibres of $\displaystyle \frac{A[T]}{(bT^n-a)}$ are $\A{2}$ (see Corollary \ref{A2-fib-cor}).

\medskip

\noindent
{\bf Corollary C.}
Let $R$ be a Noetherian factorial domain containing $\mathbb{Q}$ and $A$ an $\A{2}$-fibration over $R$. Let $n \ge 2$. Suppose there exist $a,b \in A$ such that $\displaystyle B := \frac{A[T]}{(bT^n-a)}$ satisfies $ B \otimes_R k(P) = k(P)^{[2]}$ for all prime ideals $P$ of $R$. Then $A = R[a]^{[1]} = R^{[2]}$ and $A[T] = R[bT^n-a, T]^{[1]}$.

\section{Preliminaries} \label{sec_prelim}
\begin{defn}
Let $R$ be a ring. An $R$-domain $A$ is called residually normal (factorial) if $A \otimes_R k(P)$ is normal (factorial) for all $P \in$ Spec($R$); a finitely generated flat $R$-algebra $A$ is said to be an $\A{n}$-fibration over $R$ if $A \otimes_R k(P) = k(P)^{[n]}$ for all $P \in$ Spec($R$); an $m$-tuple of algebraically independent elements $(W_1, W_2, \cdots, W_m )$ from an $\A{n}$-fibration $A$ over $R$ is called an $m$-tuple residual variable of $A$ over $R$ if $A \otimes_R k(P) = (R[W_1, W_2, \cdots, W_m ] \otimes_R k(P))^{[n-m]}$ for all $P \in$ Spec($R$).
\end{defn}

The result below is a special case of (\cite{D_SEP}, Theorem 7).
\begin{thm} \label{dut_sep-a1}
Let $k$ be a field, $L$ a separable field extension of $k$, $A$ a factorial domain containing $k$ and $B$ an $A$-algebra such that $B \otimes_k L = (A \otimes_k L)^{[1]}$. Then $B=A^{[1]}$.
\end{thm}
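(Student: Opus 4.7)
The plan is to prove this by Galois descent from $L$ down to $k$. First I would reduce to the case where $L/k$ is a finite Galois extension. Pick $w \in B \otimes_k L$ with $B \otimes_k L = (A \otimes_k L)[w]$; only finitely many scalars from $L$ appear when one expresses a chosen finite $A$-algebra generating set of $B$ as polynomials in $w$ and, conversely, writes $w$ in terms of them. Replace $L$ by a finite separable subextension $L_0/k$ containing these scalars, and then by its Galois closure inside a separable closure of $k$; separability of $L/k$ is exactly what allows this enlargement to remain separable and finite.

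Setting $G = \mathrm{Gal}(L/k)$, the group $G$ acts on $A \otimes_k L$ and on $B \otimes_k L$ through the second tensor factor, with fixed subrings $A$ and $B$ respectively, by classical Galois descent. For each $\sigma \in G$ the element $\sigma(w)$ is again a generator of the polynomial ring $B \otimes_k L$ over $A \otimes_k L$, so
\[
\sigma(w) = u_\sigma w + v_\sigma, \qquad u_\sigma \in (A \otimes_k L)^{\times},\ v_\sigma \in A \otimes_k L.
\]
The family $(u_\sigma, v_\sigma)_{\sigma \in G}$ is a non-abelian $1$-cocycle for the affine group $\mathbb{G}_a \rtimes \mathbb{G}_m$. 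Finding $\alpha \in (A \otimes_k L)^{\times}$ and $\beta \in A \otimes_k L$ such that $w' := \alpha w + \beta$ is $G$-invariant is equivalent to trivialising this cocycle; once found, $w' \in B$ and $B = A[w'] = A^{[1]}$.

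The cocycle is killed in two stages. The additive piece $(v_\sigma) \in Z^1(G, A \otimes_k L)$ is always a coboundary: by the normal basis theorem $L$ is a free $k[G]$-module, hence $A \otimes_k L$ is a free $A[G]$-module, and an induced module has vanishing higher $G$-cohomology. The multiplicative piece $(u_\sigma) \in Z^1(G, (A \otimes_k L)^{\times})$ is the real content and is where the hypothesis that $A$ is factorial is indispensable. The main obstacle is precisely the vanishing of this $H^1$: the strategy I would pursue is to use $\mathrm{Pic}(A) = 0$ together with the Galois descent exact sequence comparing units and Picard groups of $A$ and $A \otimes_k L$, reducing the problem to Hilbert 90 applied over localisations at height-one primes of $A$ (which are DVRs because $A$ is a UFD), where unit cohomology is controlled by the field-theoretic Hilbert 90 for $L/k$. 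Once $(u_\sigma)$ is shown to be a coboundary, one rescales $w$ to reduce to $u_\sigma = 1$ and then applies the additive trivialisation to obtain the $G$-invariant variable $w'$, finishing the proof.
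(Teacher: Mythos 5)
The paper does not prove this statement; it is quoted verbatim as a special case of Theorem~7 of the cited work of Dutta (\cite{D_SEP}), so there is no in-paper proof to compare against. Judged on its own terms, your Galois-descent argument is essentially the standard proof of the \emph{separable algebraic} case, and that part is sound: $A \otimes_k L$ is reduced, so any generator satisfies $\sigma(w) = u_\sigma w + v_\sigma$ with $u_\sigma$ a unit; the additive cocycle dies because $A \otimes_k L \cong A[G]$ by the normal basis theorem; and the multiplicative cocycle dies because $H^1(G,(A\otimes_k L)^\times)$ injects into $\mathrm{Pic}(A)=0$ (equivalently, by your hands-on route: Hilbert 90 over $\mathrm{Frac}(A)$ gives $u_\sigma = \sigma(\theta)/\theta$, the $G$-invariant divisor of $\theta$ descends along the \'etale cover to a divisor on $\mathrm{Spec}(A)$, which is principal by factoriality, and dividing by a generator makes $\theta$ a unit). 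Only minor details are left implicit (descent of finite generation of $B$ over $A$, the order of trivialisation, and the final faithfully flat descent of the equality $B = A[w']$).

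The genuine gap is in your very first reduction. The hypothesis is that $L$ is a \emph{separable field extension} of $k$, not a separable \emph{algebraic} one (contrast the explicit wording ``separable algebraic extension'' in Lemma~2.6 of the paper), and in the cited source the whole point of Theorem~7 is to allow non-algebraic $L$. If the finitely many scalars you extract are transcendental over $k$, the subextension $L_0$ they generate is finitely generated but not finite, and it has no Galois closure; your reduction to a finite Galois group $G$ collapses, and with it the entire cocycle argument. The correct reduction writes a finitely generated separable $L_0$ as a finite separable extension of a purely transcendental extension $k(t_1,\dots,t_r)$, and the purely transcendental step requires a completely different argument: one must descend a variable from $A \otimes_k k(\underline{t})$, which is a localisation of $A[\underline{t}]$, down to $A$ -- this is where most of the work in Dutta's proof actually lies, and nothing in your proposal addresses it. Your argument does suffice for the only way the theorem is used in this paper (descent along the finite Galois extension obtained by adjoining $n$-th roots of unity), but it does not prove the statement as written.
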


The following result gives a criterion of equality of a ring and its subring (\cite{BD_DVR}, Lemma 2.1):
\begin{lem} \label{Lem_ring-equality}
Let $A \hookrightarrow B$ be domains and $f \in A\backslash \{0\}$ be such that $A[1/f] = B[1/f]$ and $fB \cap A = fA$, then $A=B$. 
\end{lem}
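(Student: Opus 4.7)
The plan is to show that every $b \in B$ lies in $A$. Since $A[1/f] = B[1/f]$, for each $b \in B$ one can write $f^n b = a$ for some integer $n \geq 0$ and some $a \in A$; I would fix $b$, choose $n$ to be the smallest such integer, and aim to prove $n = 0$.

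Suppose toward a contradiction that $n \geq 1$. Then writing $f^n b = f \cdot (f^{n-1} b)$ exhibits $a$ as an element of $fB$. Since also $a \in A$, the hypothesis $fB \cap A = fA$ yields $a = f a'$ for some $a' \in A$. Substituting and cancelling the non-zero element $f$ inside the domain $B$ gives $f^{n-1} b = a' \in A$, which contradicts the minimality of $n$. Hence $n = 0$, so $b = a \in A$, and $A = B$ follows.

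I do not expect any real obstacle: the two hypotheses are tailored precisely so that this induction on the denominator exponent telescopes. The only technical point that genuinely uses the assumptions is the cancellation of $f$, which is legitimate because $B$ is a domain and $f \neq 0$; this is exactly why the statement is phrased for domains rather than arbitrary rings. The role of $fB \cap A = fA$ is to keep the witness $a'$ inside $A$ at each step, allowing the induction to proceed without leaving $A$.
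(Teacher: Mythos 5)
Your argument is correct and is essentially the standard proof of this lemma (which the paper only cites, from \cite{BD_DVR}, Lemma 2.1, without reproducing a proof): descent on the minimal exponent $n$ with $f^n b \in A$, using $fB \cap A = fA$ to divide by $f$ inside $A$ and the domain hypothesis to cancel $f$. The only point worth making explicit is that $f^n b = a$ holds in $B$ itself, not merely in $B[1/f]$, which is fine because localization at a nonzero element of a domain is injective.
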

 
We register the following lemma by Sathaye (\cite{Sathaye_OLP}, Lemma 1):
\begin{lem} \label{sat_lem-var}
Let $k$ be a field and suppose $X'$ is a variable in 
$k[X_1, X_2, \dots, X_n] (= k^{[n]})$. Then $X$ is comaximal with $X_1$ if and only if $X' = \alpha X_1 + \beta$ for some $\alpha, \beta \in k^*$.
\end{lem}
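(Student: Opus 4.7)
The ``if'' direction is immediate: if $X' = \alpha X_1 + \beta$ with $\alpha, \beta \in k^*$, then $\beta = X' - \alpha X_1 \in (X', X_1)$ and $\beta$ is a unit, so $(X', X_1) = (1)$. For the converse, the plan is to exploit that being a variable makes the quotient $k[X_1,\dots,X_n]/(X')$ a polynomial ring in $n-1$ variables over $k$, and then do a degree argument in $X_1$.

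More precisely, suppose $(X', X_1) = (1)$, so there exist $g, h \in k[X_1,\dots,X_n]$ with $X_1 g + X' h = 1$. Since $X'$ is a variable, $S := k[X_1,\dots,X_n]/(X') \cong k^{[n-1]}$. Reducing the above Bezout relation modulo $X'$ shows that the image of $X_1$ in $S$ is a unit, hence lies in $k^*$. So there exists $c \in k^*$ and $q \in k[X_1,\dots,X_n]$ with
\begin{equation*}
X_1 - c = X' \cdot q.
\end{equation*}

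Now I would run a degree argument in the variable $X_1$, viewing everything inside $k[X_2,\dots,X_n][X_1]$. Setting $d := \deg_{X_1} X'$ and $e := \deg_{X_1} q$, the relation $X'q = X_1 - c$ forces $d + e = 1$. The case $d = 0$ would put $X' \in k[X_2,\dots,X_n]$ with $X' \cdot (\text{leading coefficient of }q) = 1$, making $X'$ a unit in $k[X_2,\dots,X_n]$, hence a nonzero constant; this is incompatible with $X'$ being a variable (the quotient would be the zero ring, not $k^{[n-1]}$). So $d = 1$ and $e = 0$. Writing $X' = \alpha X_1 + \beta$ with $\alpha, \beta \in k[X_2,\dots,X_n]$ and $q \in k[X_2,\dots,X_n]$, comparing coefficients of $X_1$ and the constant term in $(\alpha X_1 + \beta)q = X_1 - c$ yields $\alpha q = 1$ and $\beta q = -c$, forcing $\alpha, q, \beta$ all to be nonzero constants.

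The only mild obstacle is the ruling out of $d = 0$; once one notes that a variable of a polynomial ring cannot be a unit, everything else reduces to bookkeeping with coefficients.
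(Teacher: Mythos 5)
Your proof is correct and complete. Note that the paper does not prove this lemma at all: it merely records it as Lemma 1 of Sathaye's cited paper, so there is no internal argument to compare against. Your route is elementary and self-contained: the Bezout relation shows that $X_1$ maps to a unit, hence to a constant $c \in k^*$, in $k[X_1,\dots,X_n]/(X') \cong k^{[n-1]}$, giving $X_1 - c = X'q$; the degree count in $X_1$ over the domain $k[X_2,\dots,X_n]$ then forces $\deg_{X_1}X' + \deg_{X_1}q = 1$, and you correctly dispose of the case $\deg_{X_1}X' = 0$ (a variable cannot be a unit) and extract $\alpha, \beta \in k^*$ from $\alpha q = 1$ and $\beta q = -c$ with $c \neq 0$. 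The only remark worth adding is that the statement as printed contains a typo ($X$ should read $X'$), which your argument implicitly corrects.
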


The following result by Das-Dutta (\cite{DD_plane}, Lemma 4.1) will be used to prove one of our main results:
\begin{lem} \label{our_auto-lem}
Let $k$ be a field of characteristic $p \ge 0$ and $\sigma$ a $k$-automorphism of $B = k^{[2]}$ of order $n$ such that $p \nmid n$. Suppose that $k$ contains all the $n^{th}$ roots of unity. Then there exist elements $U, V \in B$ and $\alpha, \beta \in k^*$ such that $B = k[U, V]$, $\sigma (U)= \alpha U$ and $\sigma (V) =\beta V$, where $\alpha^n = \beta^n  = 1$.
\end{lem}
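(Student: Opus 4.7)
The plan is to exploit the structure theory of $\mathrm{Aut}_{k}(k^{[2]})$ to linearise $\sigma$ and then diagonalise the resulting linear automorphism. By the Jung--van der Kulk theorem, $\mathrm{Aut}_{k}(k[X,Y])$ is the amalgamated free product of the affine group $\mathrm{Aff}_{2}(k)$ and the triangular (de Jonqui\`eres) group $J_{2}(k)$ over their intersection; Serre's fixed-point theorem applied to the associated Bass--Serre tree then forces the finite cyclic subgroup $\langle\sigma\rangle$ to be conjugate, via some $\tau\in\mathrm{Aut}_{k}(k[X,Y])$, into one of these two factors. I would treat the two cases separately and in each case produce the desired pair $(U,V)$.

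In the affine case $\tau\sigma\tau^{-1}\in\mathrm{Aff}_{2}(k)$, write the image as a matrix $M\in\mathrm{GL}_{2}(k)$ followed by a translation. Because $p\nmid n$, the averaged vector $\frac{1}{n}\sum_{i=0}^{n-1}(\tau\sigma\tau^{-1})^{i}(0)\in k^{2}$ is a fixed point of the affine action; conjugating by the translation sending this point to the origin lands $\sigma$ inside $\mathrm{GL}_{2}(k)$. Since $M^{n}=I$ and $T^{n}-1$ splits into distinct linear factors in $k[T]$ (using both $p\nmid n$ and that $k$ contains all $n$-th roots of unity), $M$ is diagonalisable over $k$, and a further linear change of variables yields $U,V\in B$ with $\sigma(U)=\alpha U$, $\sigma(V)=\beta V$, $\alpha^{n}=\beta^{n}=1$.

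In the triangular case, after conjugation $\sigma(Y)=bY+c$ and $\sigma(X)=aX+f(Y)$ for some $a,b\in k^{*}$, $c\in k$, and $f(Y)\in k[Y]$. The translational piece is absorbed first: if $b\ne 1$, replacing $Y$ by $V=Y+c/(b-1)$ gives $\sigma(V)=bV$; if $b=1$, then $\sigma^{n}(Y)=Y+nc$ together with $p\nmid n$ forces $c=0$, and $V=Y$ works. To eliminate $f(Y)$ I would seek $U=X+q(Y)$ with $\sigma(U)=aU$; writing $f(Y)=\sum c_{i}Y^{i}$ and $q(Y)=\sum d_{i}Y^{i}$ this reduces to solving $(a-b^{i})d_{i}=c_{i}$ for each $i$. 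In the non-resonant case $a\ne b^{i}$ one simply divides; in the resonant case $a=b^{i}$, expanding the identity $\sigma^{n}(X)=X$ produces $\sum_{k=0}^{n-1}a^{n-1-k}f(b^{k}Y)=0$, and the geometric sum $\sum_{k=0}^{n-1}(b^{i}/a)^{k}$ equals $n\ne 0$ in $k$ exactly at the resonant indices, forcing $c_{i}=0$ there. Hence $q$ exists, and taking $\alpha=a$, $\beta=b$ finishes this case.

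The principal obstacle is the opening reduction: the fact that $\langle\sigma\rangle$ is conjugate into $\mathrm{Aff}_{2}(k)$ or $J_{2}(k)$. This rests on the non-trivial Jung--van der Kulk structure theorem combined with Bass--Serre theory, and is the deep content of the lemma. The subsequent diagonalisation within either factor is comparatively elementary, amounting to an averaging/translation argument and a resonance calculation, both of which use $p\nmid n$ in an essential way.
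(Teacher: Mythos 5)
The paper does not prove this lemma at all: it is quoted verbatim as a preliminary from Das--Dutta (\cite{DD_plane}, Lemma 4.1), which in turn rests on Kambayashi's linearisation theorem. Your argument --- Jung--van der Kulk plus Serre's fixed-point theorem on the Bass--Serre tree to conjugate $\langle\sigma\rangle$ into $\mathrm{Aff}_2(k)$ or $J_2(k)$, then the averaging/diagonalisation step in the affine case and the resonance computation (where $\sum_{k=0}^{n-1}(b^i/a)^k = n \ne 0$ forces $c_i=0$ at resonant indices) in the triangular case --- is correct and is essentially the standard proof underlying the cited result.
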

We shall use the following consequence of Sathaye's result (\cite{Sathaye_OLP}, Corollary 1) which appears in (\cite{DD_plane}, Lemma 4.2):
\begin{lem} \label{our_rs-lem}
 Let $k$ be a field, $B = k^{[2]}$ and $b \in B \backslash k$. Suppose that there exist a separable algebraic extension $E|_k$ and an element $X' \in B \otimes_k E$ such that $B \otimes_k E = E[X']^{[1]}$ and $b \in E[X']$. Then there exists $X \in B$ such that $b \in k[X]$, $B = k[X]^{[1]}$ and $E[X']=E[X]$.
\end{lem}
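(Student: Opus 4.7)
My plan is in two steps: first, to descend the property ``$b$ lies inside a one-variable polynomial subring'' from $E$ down to $k$ by invoking Sathaye's Corollary~1 from \cite{Sathaye_OLP}; second, to verify that the descended variable $X$ generates the same $E$-subring as $X'$. For the first step, since $b\in B=k^{[2]}$ lies in the one-variable polynomial subring $E[X']\subset B\otimes_k E$ with $E[X']$ generated by a variable of $B\otimes_k E$, and $E/k$ is separable algebraic, Sathaye's corollary supplies a variable $X\in B$ with $B=k[X]^{[1]}$ and $b\in k[X]$. This already yields all conclusions of the lemma except the identification $E[X']=E[X]$.

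For the second step, write $B=k[X,Y]$, so that $B\otimes_k E=E[X,Y]$ and $X'$ is a variable of $E[X,Y]$. Express $b=f(X')$ with $f\in E[T]$ and $b=g(X)$ with $g\in k[T]$. Since $k$ is algebraically closed in $B=k^{[2]}$ and $E/k$ is algebraic, one has $B\cap E=k$ inside $B\otimes_k E$; as $b\in B\setminus k$, also $b\notin E$, forcing $\deg f\ge 1$. Now $f(X')=g(X)\in E[X]$ is independent of $Y$; writing $X'=\sum_{j\ge 0}p_j(X)Y^j$ with $d:=\deg_Y X'$, the $Y$-degree of $f(X')$ equals $d\cdot\deg f$, which must vanish, whence $d=0$ and $X'\in E[X]$. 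Because $X'$ is a variable of $E[X,Y]$, we have $E[X,Y]/(X')\cong E^{[1]}$ as an $E$-algebra, while on the other hand $E[X,Y]/(X')=\bigl(E[X]/(X')\bigr)[Y]$. Comparing the constant subfields of these two polynomial-in-one-variable presentations forces $E[X]/(X')=E$, so $\deg_X X'=1$ and $X'=\alpha X+\beta$ with $\alpha\in E^*$, $\beta\in E$. Consequently $E[X']=E[X]$.

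The main obstacle is the invocation of Sathaye's corollary in the first step, a deep Abhyankar--Moh--Suzuki--type descent for line-like elements of $k^{[2]}$ across separable algebraic extensions; once that result is in hand, the second step is routine bookkeeping with polynomial $Y$-degrees plus the standard observation that a variable of $E[X,Y]$ which lies in $E[X]$ must be $E$-affine in $X$.
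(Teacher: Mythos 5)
Your proposal is correct and follows the same route the paper intends: the paper records this lemma precisely as a consequence of Sathaye's Corollary~1 (citing \cite{DD_plane}, Lemma 4.2, for the derivation), and your first step is exactly that invocation, while your second step --- the $Y$-degree computation showing $X'\in E[X]$ and the quotient argument $E[X,Y]/(X')\cong\bigl(E[X]/(X')\bigr)[Y]\cong E^{[1]}$ forcing $X'=\alpha X+\beta$ --- correctly supplies the bookkeeping that the paper delegates to the cited reference. No gaps.
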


Finally, we record a result by Das-Dutta on residual variables (\cite{DD_residual}, Corollary 3.8, Corollary 3.19)

\begin{thm} \label{DD_res_lem-1}
Let $R$ be a Noetherian domain and $A$ an $\A{m+1}$-fibration over $R$ satisfying any one of the following conditions

\begin{enumerate}
\item [\rm (i)] $R$ is factorial domain.
\item [\rm (ii)] $\Omega_R(A)$ is a stably free $A$-module where either $R$ contains $\mathbb{Q}$ or $R$ is seminormal. 
\end{enumerate}
Then an $m$-tuple $(W_1, W_2, \cdots, W_m)$ of $A$ is an $m$-tuple residual variable of $A$ over $R$ if and only if it is an $m$-tuple variable of $A$ over $R$, i.e., $A \otimes_R k(P) = (R[W_1, W_2, \cdots, W_m] \otimes_R k(P))^{[1]}$ for all prime ideals $P$ of $R$ if and only if $A = R[W_1, W_2, \cdots, W_m]^{[1]} = R^{[m+1]}$.
\end{thm}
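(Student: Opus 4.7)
The easy direction is immediate: if $A = R[W_1, \ldots, W_m]^{[1]}$, fibering at any $P \in \mathrm{Spec}(R)$ gives the residual variable property, so all the content lies in showing that an $m$-tuple residual variable is actually a variable.

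My plan is to promote the given $\A{m+1}$-fibration structure of $A$ over $R$ to an $\A{1}$-fibration structure of $A$ over $B := R[W_1, \ldots, W_m]$, and then invoke a triviality theorem for $\A{1}$-fibrations. The algebraic independence of the $W_i$ already forces $B \cong R^{[m]}$. I would then verify flatness of $A$ over $B$ via the local criterion of flatness, using that each fibre $A \otimes_R k(P)$ is a polynomial ring over $B \otimes_R k(P) = k(P)[W_1, \ldots, W_m]$ and hence flat over it. Next, for $Q \in \mathrm{Spec}(B)$ with $Q \cap R = P$, the image $\bar{Q}$ in $k(P)[W_1, \ldots, W_m]$ is a prime with residue field $k(Q)$; localising the identity $A \otimes_R k(P) = k(P)[W_1, \ldots, W_m]^{[1]}$ at $\bar{Q}$ yields $A \otimes_B k(Q) = k(Q)^{[1]}$, so $A$ is an $\A{1}$-fibration over $B$.

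It remains to show this $\A{1}$-fibration is trivial, i.e., $A = B^{[1]}$. Under condition (i), $R$, and hence $B$, is factorial, so one can attempt to produce a $B$-variable by reducing, via a suitable faithfully flat separable extension, to the setting of Theorem \ref{dut_sep-a1}, or alternatively by invoking a Bhatwadekar-Dutta style triviality criterion for $\A{1}$-fibrations over factorial Noetherian domains. Under condition (ii), the hypothesis that $\Omega_R(A)$ is stably free, combined with the split exact sequence $0 \to \Omega_R(B) \otimes_B A \to \Omega_R(A) \to \Omega_B(A) \to 0$ (split because $B \to A$ is smooth), propagates to stable freeness of $\Omega_B(A)$; the Asanuma-Bhatwadekar structure results for $\A{1}$-fibrations with stably free module of differentials, applied over a base containing $\mathbb{Q}$ or a seminormal base, then deliver the required triviality.

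The main obstacle is this final step: upgrading the $\A{1}$-fibration structure to a genuine polynomial ring. The bifurcation into hypothesis sets (i) and (ii) reflects that no single uniform triviality theorem is available, and one must marshal different deep results from the theory of affine fibrations in each case. By comparison, I expect the flatness verification and the fibre computation over $B$ to be essentially routine diagram chasing.
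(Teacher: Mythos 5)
The paper does not prove Theorem \ref{DD_res_lem-1} at all: it is imported verbatim from Das--Dutta \cite{DD_residual} (Corollary 3.8 and Corollary 3.19), so there is no internal proof to compare against. That said, your overall architecture does match how the cited result is actually established: the trivial direction by fibering, then promotion of the $\A{m+1}$-fibration $A$ over $R$ to an $\A{1}$-fibration over $B := R[W_1,\dots,W_m]$, with flatness checked by the fibrewise criterion and the fibre computation $A \otimes_B k(Q) = k(Q)^{[1]}$ exactly as you describe. Those steps are sound; you should also record that factoriality of $R$ passes to $B$ by Gauss and seminormality of $R$ passes to $B = R^{[m]}$, since the hypotheses are stated on $R$ but must be applied to $B$.

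The genuine gap is in the final step, which you correctly identify as carrying all the content but then leave as a black box --- and one of your two proposed routes for case (i) does not work. Theorem \ref{dut_sep-a1} descends a polynomial-ring structure that is \emph{already known} to hold after a separable field extension $k \to L$; nothing in the $\A{1}$-fibration hypothesis produces such an $L$ with $A \otimes_k L = (B \otimes_k L)^{[1]}$, so there is no ``setting of Theorem \ref{dut_sep-a1}'' to reduce to. The tool actually needed for (i) is the Bhatwadekar--Dutta theorem that an $\A{1}$-fibration over a Noetherian \emph{normal} domain is the symmetric algebra of an invertible ideal, whence trivial once $\mathrm{Pic}(B) = 0$, as for factorial $B$. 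For (ii), your propagation of stable freeness to $\Omega_B(A)$ via the split sequence for the smooth map $B \to A$ is correct, but Asanuma's structure theorem then yields only that $A$ is a \emph{stably} polynomial $B$-algebra, $A^{[\ell]} = B^{[\ell+1]}$; the descent from stably polynomial to polynomial is precisely Hamann's steadfastness theorem, and that is exactly where the hypothesis ``$R$ contains $\mathbb{Q}$ or $R$ is seminormal'' is consumed. Without naming these two results (or proving substitutes), the argument does not close; with them, your outline becomes essentially the proof given in \cite{DD_residual}.
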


\section {Main Results}

\subsection*{\bf \S \ Cancelling variables of the form $bT^n -a$ over a field}

We shall first show that Theorem \ref{david_cncl} can be generalised to any field. The proof follows from the proofs of Proposition 4.4 and Theorem 4.5 of \cite{DD_plane}; but for reader's convenience the proof is being included. 

\begin{thm} \label{our_field-th}
Let $k$ be a field of characteristic $p \ge 0$ and $A$ a normal affine $k$-domain. Suppose there exist $a, b \in A$, $b \ne 0$, such that $\displaystyle{B :=\frac{A[T]}{(bT^n -a)} = k^{[2]}}$, where $n \ge 2$ and $p \nmid n$. Then there exist variables $X, Y$ in $B$ such that $Y$ is the image of $T$ in $B$, $b \in k[X]$, $A = k[X, a] = k^{[2]}$ and $A[T] = k[X, bT^n-a, T]$.
\end{thm}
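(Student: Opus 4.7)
The plan is to follow the two-step strategy of Das--Dutta \cite{DD_plane}: first I would prove the theorem under the extra assumption $\mu_n \subset k$, then reduce the general case to this via separable descent, invoking Lemma \ref{our_rs-lem}.

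\textbf{Step 1 ($\mu_n \subset k$).} Let $Y$ denote the image of $T$ in $B$, so that $bY^n = a$. With $\zeta_n \in k$ a primitive $n$-th root of unity, there is a well-defined $A$-automorphism $\sigma$ of $B$ sending $Y$ to $\zeta_n Y$ (since $b(\zeta_n Y)^n - a = bY^n - a = 0$); one checks that it has order exactly $n$. By Lemma \ref{our_auto-lem} there are coordinates $B = k[U, V]$ with $\sigma(U) = \alpha U$, $\sigma(V) = \beta V$, where $\alpha^n = \beta^n = 1$. I would then use that $B[1/b] = A[1/b][T]/(T^n - a/b)$ is a Kummer extension of $A[1/b]$ with Galois group $\langle \sigma \rangle$, so $B[1/b]^{\sigma} = A[1/b]$; together with the normality of $A$ and the fact that $B^{\sigma}$ is a normal subring with the same fraction field as $A$, this forces $A = B^{\sigma}$. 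A sub-case analysis on the eigenvalues $(\alpha, \beta)$, using that $Y$ lies in the $\zeta_n$-eigenspace and $b$ is $\sigma$-invariant, would then produce a variable $X$ of $B$ (essentially one of $U$, $V$, up to a linear change) with $b \in k[X]$, $A = B^{\sigma} = k[X, a]$, and $B = k[X, Y]$. The identity $A[T] = k[X, bT^n - a, T]$ follows since $b \in k[X]$ and $a = bT^n - (bT^n - a)$.

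\textbf{Step 2 (general $k$).} Set $k_1 := k(\zeta_n)$, a finite Galois extension of $k$ with group $G$, separable since $p \nmid n$. Base-changing, $B_1 := B \otimes_k k_1 = k_1^{[2]}$, and $A_1 := A \otimes_k k_1$ embeds in $B_1$ by flatness; separability of $k_1/k$ over the normal $k$-domain $A$ gives that $A_1$ is a normal affine $k_1$-domain. Step 1 applied to $A_1 \subset B_1$ yields $X_1 \in B_1$ with $b \in k_1[X_1]$ and $A_1 = k_1[X_1, a]$. Lemma \ref{our_rs-lem} applied to $B = k^{[2]}$, $b$, and the separable extension $k_1/k$ descends $X_1$ to an element $X \in B$ with $b \in k[X]$, $B = k[X]^{[1]}$, and $k_1[X] = k_1[X_1]$. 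Since $Y = \bar T \in B$ is a $k_1[X]$-variable of $B_1$, a degree comparison shows $Y$ is a $k[X]$-variable of $B$, giving $B = k[X, Y]$. Taking $G$-invariants of $A_1 = k_1[X, a]$ viewed inside $B_1 = k_1[X, Y]$ yields $A = A_1^G = k[X, a]$, and $A[T] = k[X, bT^n - a, T]$ as before.

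The main obstacle is the sub-case analysis in Step 1 on the eigenvalues $(\alpha, \beta)$: depending on their orders, the structure of the $\zeta_n$-eigenspace and the shape of the invariant ring $B^{\sigma}$ vary, and identifying the correct variable $X$ with $b \in k[X]$ must be carried out in each configuration. Another subtle point is that $B$ is not integral over $A$ in general (the leading coefficient $b$ of $bT^n - a$ need not be a unit), so the identification $A = B^{\sigma}$ has to be made by first localizing at $b$ and then invoking the normality of $A$.
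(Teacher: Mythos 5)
There is a genuine gap in Step 1, and it sits at the heart of the argument. You assert that normality of $A$ forces $A = B^{\sigma}$. This is false: the invariant ring is $B^{\sigma} = C := A[a/b]$ (note $t^n = a/b$ is $\sigma$-invariant but need not lie in $A$), and $C$ is \emph{not} integral over $A$ in general, so normality of $A$ gives you nothing. Concretely, take $A = k[x,y] = k^{[2]}$, $b = x$, $a = y$, $n = 2$: then $B = k[x,t]$ with $xt^2 = y$, and $B^{\sigma} = k[x,t^2] = A[y/x] \supsetneq A = k[x, xt^2]$, even though both rings are normal with the same fraction field. Your localization remark ($B[1/b]^{\sigma} = A[1/b]$) is correct but does not repair this, since $A[1/b] = C[1/b]$ tells you nothing about the difference between $A$ and $C$ away from $b$. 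The paper's proof instead works with $C = k[U', a/b]$ throughout and recovers $A = k[X,a]$ at the very end by a separate conductor-type computation: one proves $bA \cap k[X,a] = b\,k[X,a]$ (Wright's argument, using that no height-one prime of $A$ contains both $a$ and $b$ and that $A$ is normal) and then applies Lemma \ref{Lem_ring-equality} with $f = b$. None of this appears in your proposal.

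The second substantive omission is the claim $b \in k[X]$. You present it as the outcome of ``a sub-case analysis on the eigenvalues $(\alpha,\beta)$,'' but $\sigma$-invariance of $b$ only places $b$ in $C = k[U', a/b]$, i.e.\ $b$ is a polynomial in \emph{two} variables of $C$; it does not put $b$ in $k[U']$. The paper's route is genuinely harder: one takes the distinct irreducible factors $p_i$ of $b$ in $C$, shows $\mathfrak{p}_i := A \cap p_iC$ is a maximal ideal of $A$ with $p_iC = \mathfrak{p}_iC$ and $C/p_iC \cong L_i[\zeta_i]$ a polynomial ring over a finite extension $L_i$ of $k$, deduces that $U' - g(a/b)$ is a variable of $B \otimes_k L_i$ divisible by $p_i$ (hence a constant multiple of it), applies Sathaye's comaximality lemma (Lemma \ref{sat_lem-var}) to write every $p_i$ as $\lambda_i X + \pi_i$ with $X = p_1$, concludes $b \in L[X]$, and only then uses normality of $A$ (legitimately, since $X$ is now integral over $A$ and lies in $\operatorname{Frac}(A)$) to descend $X$ into $A$ and $b$ into $k[X]$. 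Your Step 2 (adjoining $\mu_n$, Galois/separable descent via Theorem \ref{dut_sep-a1} and Lemma \ref{our_rs-lem}) matches the paper's Case II and is fine in outline, but it cannot compensate for the missing content of Step 1.
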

\begin{proof}
\noindent
\underline{Case - I}: Suppose that $k$ contains all the $n^{th}$ roots of unity.

Let $\sigma$ be the $k$-automorphism of $B$ induced by the $k$-automorphism  $\tilde{\sigma}$ of $A[T]$ defined by $\tilde{\sigma}(T) = \omega T$ where $\omega$ is a primitive $n^{th}$ root of unit. Obviously, $\sigma$ has order $n$.

\medskip

Since $B = k^{[2]}$, by Lemma \ref{our_auto-lem} there exist variables $U', V' \in B$ and $\alpha, \ \beta \in k^*$ such that $\sigma(U')= \alpha U'$ and $\sigma(V') = \beta V'$ where $\alpha^n = \beta^n = 1$. Let $t$ be the image of $T$ in $B$ and $C = A[a/b]$. Then $t^n = a/b$ and $B = A[t] = C[t] = C \oplus t C \oplus t^2 C \oplus \cdots \oplus t^{n-1} C$. Observe that for all $x \in B, \ t \mid (x - \sigma(x))$. Thus $t \mid (1-\alpha) U'$ and $t \mid (1-\beta)V'$. Without loss of generality we may assume that $\alpha = 1$ and hence we get that $V'$ is a unit multiple of $t$ and the ring of invariant of $\sigma$ is $C = A[a/b] = k[U', a/b] = k^{[2]}$. 

\medskip

Set $Y :=t$. We shall show that there exists $X \in A$ such that $B = k[X,Y]$, $b \in k[X]$ and $A = k[X,a] = k^{[2]}$.

\medskip

If $b \in k^*$, then $A = A[a/b] = k[U', a/b]$. Then setting $X := U'$ we have $A = k[X,a]$, $B = k[X,Y]$ and $C = k[X, a/b]$. Now, let $b \notin k^*$. Suppose $p_1, p_2, \cdots , p_m$ be distinct irreducible factors of $b$ in $C = k^{[2]}$. We shall show that $p_i$'s are pair wise comaximal. 

\medskip

Let $\p_i = A \cap  p_i C$. Since $a, b \in A \cap  b C \subseteq \p_i$, ht$(\p_i) > 1$ for all $i = 1,2, \cdots , m$. Since dim$(A) = 2$, each $\p_i$ is a maximal ideal of $A$. Let $\bar{k}$ denote an algebraic closure of $k$, $L_i$ be a subfield of $\bar{k}$ isomorphic to $A/\p_i$ and let $L$ be  
the subfield of $\bar{k}$ generated by the fields $L_1, L_2, \dots, L_m$. 
 Then $L_i$ is an algebraic extension of $k$ and  $C/\p_i C = (A/\p_i) [\zeta_i] = L_i[\zeta_i]$ where $\zeta_i$ is the image of $a/b$ in $C/\p_i C$. Since $\p_iC \subseteq p_i C$, it follows that $\zeta_i$ is transcendental over $L_i$ (otherwise $p_i \mid a$ and hence $p_i$ is a non-zero divisor in $C$ which is an integral domain, a contradiction) and $\p_iC$ is a prime ideal of $C$. As ht$\ p_iC =1$ and $\p_i C \ne 0$, we have $p_i C = \p_i C$. This shows that $p_i$ are pairwise comaximal in $C$ and hence in $B$.

\medskip

Let $g(\zeta_i)$ be the image of $U'$ in $C/p_i C = L_i[\zeta_i]$. 
Then $U' -g(a/b)$ is divisible by $p_i$ in $C \otimes_k L_i$. But $U' - g(a/b) = U'-g(Y^n)$ is a variable in both $C\otimes_k L_i$ and $B \otimes_k L_i$. Hence $U' - g(a/b)$ is a constant multiple of $p_i$. Thus $C \otimes_k L_i=L_i[p_i, a/b]$, $B \otimes_k L_i = L_i[p_i, Y]$, and for $i \ne j$, $(p_i, p_j)B \otimes_k L = B \otimes_k L$. Set $X:=p_1$. Using Lemma \ref{sat_lem-var}, we have $p_i = \lambda_i X + \pi_i$ for $\lambda_i \in L^*$ and $\pi_i \in L$. So, we have $b \in L[X]$. This shows that $X$ is integral over $A \otimes_k L$ and hence over $A$. As $X \in A[a/b]$ and $A$ is a normal domain, we have $X \in A$. Since $L|_k$ is faithfully flat, it follows that $B = k[X, Y]$ with $X \in A, Y = t$ and $b \in k[X]$; and $C = k[X, a/b]$.

\medskip

%Now, repeating the arguments in (\cite{DW_CNCL}, pg. 98) we get $A = k[X,a]$. This completes the proof.
Now, to complete the proof, we are only left to show that $A = k[X,a]$. First we claim that $bA \cap k[X,a] = b k[X, a]$. We repeat the argument in (\cite{DW_CNCL}, pg. 98) to prove this claim. Let $h \in bA \cap k[X,a]$. Then
$$
h = h_0(X) + h_1(X)a + \cdots + h_d(X)a^d.
$$
Since $a \in bC$, it follows that $h_0(X) \in bC$. But since $C = k[X, a/b](=k^{[2]})$, we get $h_0(X) \in bk[X]$ and hence $h_0(X) \in bk[X,a]$. So, we may replace $h$ by $h-h_0(X) = h_1(X)a + h_2(X)a^2 + \cdots + h_d(X)a^d$. Let $h' = h_1(X) + h_2(X)a + \cdots + h_d(X)a^{d-1}$. Then $h = h'a$. Since there is no height one prime ideal of $A$ which contains both $a$ and $b$, and since $h'a \in bA$, it follows from the normality of $A$ that (the associative prime ideals of $a$ are of height one) $\displaystyle h'/b \in \underset{\p \in \ \text{Spec}(A); \ ht(\p)=1} {\overset{} {\bigcap}} A_{\p} = A$. Therefore, $h' \in bA$. Now we argue as before that $h_1(X) \in bk[X,a]$. We continue this process to conclude that $h_i(X) \in bk[X,a]$ for $0\le i \le d$, which proves the claim. Now, since $b \in k[X,a]$ is a non-zero element, applying Lemma \ref{Lem_ring-equality} we have $A = k[X,a]= k^{[2]}$.

\medskip

Thus, if $k$ contains all the $n^{th}$ roots of unity, then there exist variables $X, Y$ in $B$ such that $Y$ is the image of $T$ in $B$, $b \in k[X]$, $A = k[X, a] = k^{[2]}$ and $A[T] = k[X, bT^n-a, T]$.

\medskip

Now we take the other case.

\noindent
\underline{Case - II}: Suppose $k$ does not contain all the $n ^{th}$ roots of unity.

Let $E$ be the field obtained by adjoining all the $n^{th}$ roots of unity to $k$ and let $g = bT^n-a$. Since $p \nmid n$, $E$ is a Galois extension over $k$. By Case - I, we get variables $X'$ and $Y'$ of $B \otimes_k E$ ($=(A \otimes_k E)[T]/(g) = E^{[2]}$)  such that $Y'$ is the image of $T$,  $b \in E[X']$ and $A \otimes_k E = E[X',a]$. As $E|_k$ is separable, we have $A = {k[a]}^{[1]}$ by Theorem \ref{dut_sep-a1}. If $b \in A \backslash k$, then, by Lemma \ref{our_rs-lem}, we get $X \in A = {k[a]}^{[1]} = k^{[2]}$ such that $A = k[X]^{[1]}$, $b \in k[X]$ and $E[X] = E[X']$. Since $E|_k$ is faithfully flat, $E[X',a] = E[X, a]$ and $k[X,a] \subseteq A$, we have $A=k[X, a]$. If $b \in k$, then we choose $X$ to be any complementary variable of $a$ in  $A$. Now $A[T] = k[X,a, T] = k[X, bT^n-a, T]$, as $b \in k[X]$. This completes the proof.
\end{proof}

\subsection*{\bf \S \ Cancelling variables of the form $bT^n -a$ over a normal domain}
We now prove Proposition A.

\begin{prop} \label{our_UFD-normal-th}
 Let $R$ be a Noetherian domain containing a field of characteristic $p \ge 0$ and $A$ a finitely generated flat residually normal $R$-domain satisfying any of the following conditions
 
\begin{enumerate}
\item [\rm (i)] $R$ is factorial.
\item [\rm (ii)] $\Omega_R(A)$ is a stably free $A$-module with either $R$ contains $\mathbb{Q}$ or $R$ is seminormal.
\end{enumerate}
 
Let $n \ge 2$ be such that $p \nmid n$. Suppose there exist $a,b \in A$ such that, for each $P \in$ Spec($R$), $b \notin PA_P$ and $\displaystyle B :=\frac{A[T]}{(bT^n-a)}$ satisfies $B \otimes_R k(P) = k(P)^{[2]}$. Then $A = R[a]^{[1]} = R^{[2]}$ and $A[T] = R[bT^n-a, T]^{[1]}$.
 \end{prop}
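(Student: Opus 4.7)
The strategy is to reduce to the field case Theorem~\ref{our_field-th} at every fibre of $\operatorname{Spec}(R)$, read off that $a$ is a one-tuple residual variable of $A$ over $R$ and that $(bT^n-a,T)$ is a two-tuple residual variable of $A[T]$ over $R$, and then globalise by two applications of the Das--Dutta criterion Theorem~\ref{DD_res_lem-1}.

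\emph{Fibrewise reduction.} Fix $P \in \operatorname{Spec}(R)$, write $k = k(P)$, $A_k = A \otimes_R k$, and let $\bar a, \bar b$ denote the images of $a, b$ in $A_k$. The hypothesis $b \notin PA_P$ gives $\bar b \neq 0$, so
\[
B \otimes_R k \;=\; A_k[T]/(\bar bT^n - \bar a) \;=\; k^{[2]}.
\]
Since $A$ is finitely generated and residually normal over $R$, $A_k$ is a normal affine $k$-domain; and since $R$ contains a field of characteristic $p$, so does $k$, so $p \nmid n$ in $k$ as well. Theorem~\ref{our_field-th} therefore produces $X_P \in A_k$ with $\bar b \in k[X_P]$, $A_k = k[X_P, \bar a] = k^{[2]}$, and $A_k[T] = k[X_P, \bar bT^n - \bar a, T]$. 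The first of these identities shows that $A$ is an $\A{2}$-fibration over $R$ with $(a)$ a one-tuple residual variable, and the second shows that $(bT^n - a, T)$ is a two-tuple residual variable of $A[T]$ over $R$.

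\emph{Globalising.} Since the hypotheses (i), (ii) of Proposition~\ref{our_UFD-normal-th} coincide with those of Theorem~\ref{DD_res_lem-1}, the latter applied at $m = 1$ promotes $a$ to a genuine variable of $A$, yielding $A = R[a]^{[1]} = R^{[2]}$. In particular $A[T] = R^{[3]}$ is automatically an $\A{3}$-fibration. To reapply Theorem~\ref{DD_res_lem-1} with $A$ replaced by $A[T]$ its hypotheses must survive: in case (i) nothing changes, while in case (ii) the split short exact sequence
\[
0 \;\to\; \Omega_R(A) \otimes_A A[T] \;\to\; \Omega_R(A[T]) \;\to\; A[T]\, dT \;\to\; 0
\]
shows that stable freeness of $\Omega_R(A)$ over $A$ passes to $\Omega_R(A[T])$ over $A[T]$, and the conditions on $R$ itself are unchanged. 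A second application of Theorem~\ref{DD_res_lem-1}, now at $m = 2$, therefore upgrades $(bT^n - a, T)$ to a genuine variable pair of $A[T]$, giving $A[T] = R[bT^n - a, T]^{[1]}$.

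\emph{Main obstacle.} All the real content sits inside Theorems~\ref{our_field-th} and~\ref{DD_res_lem-1}; the only points demanding care are the verification that $\bar b \neq 0$ on every fibre (precisely the role of the hypothesis $b \notin PA_P$, which is needed so that the quotient $B \otimes_R k(P)$ retains the form to which Theorem~\ref{our_field-th} applies) and the transfer of the stable-freeness hypothesis from $\Omega_R(A)$ to $\Omega_R(A[T])$ via the standard split Jacobi--Zariski sequence.
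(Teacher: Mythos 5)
Your proposal is correct and follows essentially the same route as the paper: reduce fibrewise to Theorem~\ref{our_field-th} (using $b \notin PA_P$ to ensure $\bar b \neq 0$), conclude that $a$ and $(bT^n-a,T)$ are residual variables of $A$ and $A[T]$ respectively, and globalise via Theorem~\ref{DD_res_lem-1}. Your explicit verification that stable freeness of $\Omega_R(A)$ passes to $\Omega_R(A[T])$ via the split sequence is a point the paper leaves implicit, but it is the same argument.
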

\begin{proof}
Fix $P \in$ Spec($R$). Letting $\ol{a}$ and $\ol{b}$ respectively denote the images of $a$ and $b$ in $A \otimes_R k(P)$, we have $\displaystyle{B \otimes_R k(P) = \frac{(A \otimes_R k(P))[T]}{(\ol{b}T^n-\ol{a})} = k(P)^{[2]}}$. Then since $A \otimes_R k(P)$ is a normal affine $k(P)$-domain, by Theorem \ref{our_field-th}, there exist variables $X_P, Y_P$ in $B \otimes_R k(P)$ such that $Y_P$ is the image of $T$ in $B \otimes_R k(P)$, $X_P \in A \otimes_R k(P)$, $A\otimes_R k(P) = k(P)[X_P,\ol{a}] = k(P)^{[2]}$ and $A[T]\otimes_R k(P) = k(P)[T,\ol{b}T^n -\ol{a}]^{[1]} = k(P)^{[3]}$. 

\medskip

This shows that $a$ is a residual variable of $A$ over $R$ and $(bT^n-a, T)$ is a pair of residual variables of $A[T]$ over $R$. Since $R$ is a Noetherian domain, and since $A$ is finitely generated flat $R$-algebra, by Theorem \ref{DD_res_lem-1} we have $A = R[a]^{[1]} = R^{[2]}$ and $A[T] = R[bT^n-a, T]^{[1]}$, if either $\Omega_R(A)$ is a stably free module or $R$ is a factorial domain. This completes the proof.
\end{proof}

\begin{rem}
\item [\rm (I)] In Proposition \ref{our_UFD-normal-th}, if we assume that $R$ is a factorial domain, then it can be seen that there exists $X' \in A$ such that $b \in R[X']$ and $A \otimes_R K = K[X', a]$.

\medskip

\item [\rm (II)] When $R$ is a DVR, the following holds as a special case of Proposition \ref{our_UFD-normal-th}:

\textit{Let $(R, \pi)$ be a DVR containing a field of characteristic $p \ge 0$ and $A$ a finitely generated residually normal $R$-domain. Suppose there exist $a, b \in A$, $\pi \nmid b$, such that $\displaystyle{\frac{A[T]}{(bT^n-a)} = R^{[2]}}$, where $n \ge 2$ and $p \nmid n$. Then there exists $X' \in A$ such that $b \in R[X']$, $A[1/\pi] = R[1/\pi][X', a]$, $A = R[a]^{[1]} = R^{[2]}$ and $A[T] = R[bT^n-a, T]^{[1]}$.}

\medskip

\item [\rm (III)]In Proposition \ref{our_UFD-normal-th}, if we assume that $b=1$ (or $b \in R^*$), then the condition ``$A$ is a residually normal domain'' holds automatically due to the fact that $\displaystyle B \otimes_R k(P) = \frac{(A \otimes_R k(P))[T]}{(T^n-\bar{a})} = k(P)^{[2]}$ is a normal domain and is a free $A \otimes_R k(P)$-module.

\item [\rm (IV)] When $A = R^{[2]}$, the conclusion of Proposition \ref{our_UFD-normal-th} follows even if $b$ belongs to $PA_P$ for some prime ideal $P$ of $R$ (see \cite{DD_plane}, Theorem 6.2).
\end{rem}
 
As another consequence of Theorem \ref{our_field-th} we see that the answer to Problem \ref{can_prob} is affirmative over Noetherian normal domains, if $A[T]$ has a variable of the form $bT^n-a$ where $n \ge 2$.

\begin{thm} \label{our_cor_UFD_cancel}
Let $R$ be a Noetherian domain containing a field of characteristic $p \ge 0$, which either contains $\mathbb{Q}$ or is seminormal; and $A$ an $R$-algebra. Let $a,b \in A$ be such that
$$A[T] = R[bT^n-a]^{[2]} = R^{[3]},$$
where $n \ge 2$ and $p \nmid n$. Then $A = R[a]^{[1]} = R^{[2]}$ and $A[T] = R[bT^n-a, T]^{[1]}$.
\end{thm}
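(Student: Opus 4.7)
The plan is to apply the residual-variables criterion (Theorem~\ref{DD_res_lem-1}) to $A[T]$ viewed as an $\A{3}$-fibration over $R$, taking $(F, T) := (bT^n - a, T)$ as the candidate two-tuple of residual variables. If I can show $(F, T)$ is a two-tuple residual variable, then Theorem~\ref{DD_res_lem-1} upgrades this to an actual two-tuple variable, i.e., $A[T] = R[bT^n - a, T]^{[1]}$; writing $A[T] = R[bT^n - a, T, W]$ for some $W$ and setting $T = 0$ (which sends $F = bT^n - a$ to $-a$) then yields $A = R[-a, W_0] = R[a]^{[1]}$, as desired.

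The ambient hypotheses of Theorem~\ref{DD_res_lem-1} are immediate from $A[T] = R^{[3]}$: it is trivially an $\A{3}$-fibration over $R$; $\Omega_R(A[T]) = A[T]^3$ is free, hence stably free; and $R$ already satisfies the seminormal-or-contains-$\mathbb{Q}$ hypothesis. So the heart of the proof is the fibrewise residual-variable condition, namely $A[T] \otimes_R k(P) = (k(P)[\bar F, \bar T])^{[1]}$ for every $P \in \text{Spec}(R)$. Using the identity $A[T]/(F, T) = A/(a)$, this is equivalent to the statement $\tilde A/(\bar a) = k(P)^{[1]}$, where $\tilde A := A \otimes_R k(P)$.

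For the fibre analysis, I would first observe that $\tilde A$ is a normal affine $k(P)$-domain (since $\tilde A[T] = k(P)^{[3]}$ is a normal UFD with $T$ a variable over $\tilde A$) and that base-changing $A[T]/(F) = R^{[2]}$ gives $\tilde A[T]/(\bar F) = k(P)^{[2]}$. I would then split into two cases. In the generic case $\bar b \neq 0$ in $\tilde A$, Theorem~\ref{our_field-th} applies with $k = k(P)$, $A = \tilde A$, $a = \bar a$, $b = \bar b$ and produces $\tilde A = k(P)[X, \bar a]$, whence $\tilde A/(\bar a) = k(P)^{[1]}$. In the degenerate case $\bar b = 0$, the element $\bar F$ reduces to $-\bar a$, so $\tilde A[T]/(\bar F) = k(P)^{[2]}$ collapses to $(\tilde A/(\bar a))[T] = k(P)^{[2]}$, and one-variable cancellation over the field $k(P)$ forces $\tilde A/(\bar a) = k(P)^{[1]}$.

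The main obstacle is precisely the case $\bar b = 0$: this is the situation that the condition ``$b \notin PA_P$'' in Proposition~\ref{our_UFD-normal-th} was designed to exclude, and Wright-type theorems such as Theorem~\ref{our_field-th} do not apply when $b$ vanishes in a fibre. The saving observation is that when $\bar b$ vanishes, $\bar F$ collapses to $-\bar a$, so the needed fibre-quotient structure $\tilde A/(\bar a) = k(P)^{[1]}$ drops out of plain one-variable cancellation without requiring Theorem~\ref{our_field-th} at all.
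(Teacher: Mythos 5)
Your overall architecture coincides with the paper's: reduce to the fibrewise statement, apply Theorem \ref{our_field-th} to the fibres where $b$ survives, treat the fibres where $\bar b=0$ separately, and finish with the residual-variable criterion Theorem \ref{DD_res_lem-1}. The generic case is fine (and there you in fact get the full conclusion $\tilde A[T]=k(P)[X,\bar F,\bar T]$ from Theorem \ref{our_field-th}, not merely the quotient statement). The gap is in the degenerate case $\bar b=0$, and it originates in your claim that the residual-variable condition $A[T]\otimes_R k(P)=(k(P)[\bar F,\bar T])^{[1]}$ is \emph{equivalent} to $\tilde A/(\bar a)=k(P)^{[1]}$. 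Only the forward implication is formal. The reverse implication is an epimorphism problem: when $\bar F=-\bar a$ you would need to pass from ``$\bar a$ has residue ring $k(P)^{[1]}$'' to ``$\tilde A=k(P)[\bar a]^{[1]}$'', which is an Abhyankar--Moh--Suzuki type statement. That requires first knowing $\tilde A=k(P)^{[2]}$ (itself a cancellation theorem), and, more seriously, it is false in positive characteristic, which the theorem explicitly allows. So your degenerate case establishes strictly less than what Theorem \ref{DD_res_lem-1} needs.

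The missing ingredient is the full strength of the hypothesis $A[T]=R[bT^n-a]^{[2]}$, which your argument never uses beyond its consequences $A[T]=R^{[3]}$ and $A[T]/(F)=R^{[2]}$. The paper's proof reduces the equality $A[T]=R[F]^{[2]}$ itself modulo $P$: when $\bar b=0$ this reads $(A\otimes_R k(P))^{[1]}=(k(P)[\bar a])^{[2]}$, and then the Abhyankar--Eakin--Heinzer cancellation result (\cite{AEH_Coff}, 4.5) cancels the superfluous variable to give $A\otimes_R k(P)=(k(P)[\bar a])^{[1]}$ directly, in every characteristic, with no epimorphism theorem. With that substitution in the $\bar b=0$ case your proof closes up and matches the paper's. (Compare Corollary \ref{A2-fib-cor}, where the paper does argue essentially as you propose via one-variable cancellation plus Abhyankar--Moh--Suzuki --- but only under the additional hypotheses that $R\supseteq\mathbb{Q}$ and that $A$ is an $\A{2}$-fibration, precisely to make that route legitimate.)
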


\begin{proof}
Note that since $A[T] = R^{[3]}$, $A$ is a finitely generated flat residually factorial $R$-domain; and by (\cite{DD_residual}, Lemma 2.1), $\Omega_R(A)$ is a stably free $A$-module.

\medskip

Fix a prime ideal $P$ of $R$. If $b \notin PA_P$, then applying Theorem \ref{our_field-th}, we have $A \otimes_R k(P) = (R[a] \otimes_R k(P))^{[1]}$ and $A[T] \otimes_R k(P) = (R[bT^n-a, T] \otimes_R k(P))^{[1]}$. If $b \in PA_P$, then $(A \otimes_R k(P))^{[1]} = (R[bT^n -a] \otimes_R k(P))^{[2]} = (R[a] \otimes_R k(P))^{[2]}$. By a result of Abhyankar-Eakin-Heinzer (\cite{AEH_Coff}, 4.5) we have $(A \otimes_R k(P)) =  (R[a] \otimes_R k(P))^{[1]}$, and hence $A[T] \otimes_R k(P) = k(P)[\bar{a},T]^{[1]} = k(P)[\bar{b}T^n-\bar{a}, T]^{[1]} = (R[bT^n-a, T] \otimes_R k(P))^{[1]}$. Since $P \in$ Spec($R$) is arbitrary, using Theorem \ref{DD_res_lem-1}, we get $A = R[a]^{[1]}=R^{[2]}$ and $A[T] = R[bT^n-a, T]^{[1]}$. This completes the proof.
\end{proof}

\begin{rem}

\item [\rm (I)] The converse of Theorem \ref{our_cor_UFD_cancel} holds: If $A = R^{[2]}$, then there exist $a,b \in A$, e.g., $a = Y$ and $b = X$, such that $A[T] = R[bT^n-a]^{[2]}$.

\item [\rm (II)] Example \ref{asa_example} shows the necessity of the condition $n \ge 2$ in Theorem \ref{our_cor_UFD_cancel}.
\end{rem}

Theorem \ref{our_cor_UFD_cancel} shows that in Example \ref{asa_example}, $A[T]$ does not have any coordinate plane of the form $bT^n-a$ with $a$, $b$ in $A$, $n \ge 2$ and $p \nmid n$. More generally, we have

\begin{cor} \label{Our_Cor_Neena}
Let $k$ be a field of characteristic $p \ge 0$, $R = k[\pi] = k^{[1]}$ and $\displaystyle A = \frac{R[X,Y,Z]}{(\pi^m Z - F(\pi, X,Y))}$ where $m \ge 1$ and $\displaystyle \frac{k[X,Y]}{(F(0, X,Y))} = k^{[1]}$. Then $A = R^{[2]}$ if and only if there exist $a,b \in A$ and $n \ge 2$ with $p \nmid n$ such that for $\displaystyle B := \frac{A[T]}{(bT^n-a)}$ we have

\bigskip

either

\item [\rm (I)] for each $P \in$ Spec$(R)$, $B \otimes_R k(P) = k(P)^{[2]}$ and $b \notin PA_P$.

\medskip

or

\item [\rm (II)] $B = R^{[2]}$ and $b \notin PA_P$ for each $P \in$ Spec$(R)$.

\medskip

or

\item [\rm (III)] $\displaystyle A[T] = R[bT^n-a]^{[2]}$.
\end{cor}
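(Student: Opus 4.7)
The plan is to reduce each direction of the equivalence to a previously established result, so that the corollary is essentially a packaging of Proposition \ref{our_UFD-normal-th} and Theorem \ref{our_cor_UFD_cancel} specialized to the Asanuma--Gupta family.

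For the ``if'' direction, I would first observe that under the standing hypotheses on $F$ the polynomial $\pi^m Z - F(\pi, X, Y)$ is irreducible in $R[X, Y, Z]$ (its reduction modulo $\pi$ is, up to sign, the irreducible polynomial $F(0, X, Y)$, since $k[X,Y]/(F(0,X,Y)) = k^{[1]}$ is a domain), so $A$ is a domain. Hence $A$ is torsion-free over the PID $R = k[\pi]$ and therefore $R$-flat. By \cite{Neena_Threefold}, one has $A \otimes_R k(P) = k(P)^{[2]}$ for every $P \in \mathrm{Spec}(R)$, so $A$ is a finitely generated flat residually factorial (in particular residually normal) $R$-domain; moreover $R$ is factorial. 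With these hypotheses in place, condition (I) is precisely the hypothesis of the factorial case of Proposition \ref{our_UFD-normal-th}, which yields $A = R[a]^{[1]} = R^{[2]}$. Condition (II) implies condition (I) because $B = R^{[2]}$ forces $B \otimes_R k(P) = k(P)^{[2]}$ for every $P$ and the residual hypothesis on $b$ is assumed. Finally, condition (III) is precisely the hypothesis of Theorem \ref{our_cor_UFD_cancel}, which again gives $A = R^{[2]}$.

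For the ``only if'' direction I would exhibit a single choice of witnesses that satisfies all three conditions simultaneously. Writing $A = R[U, V] = R^{[2]}$ and choosing $n \ge 2$ with $p \nmid n$ (take $n = 2$ if $p \ne 2$, otherwise $n = 3$), set $b := U$ and $a := V$. Eliminating $V$ via the relation $UT^n = V$ gives $B = A[T]/(UT^n - V) = R[U, T] = R^{[2]}$, which establishes (II) (and hence (I)); the element $b = U$ avoids $PA_P$ for every $P$ because $U$ is a complementary variable in $R_P[U, V] = A_P$. For (III), the identity $V = UT^n - (UT^n - V)$ yields $A[T] = R[U, V, T] = R[UT^n - V, U, T] = R[UT^n - V]^{[2]}$.

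I do not anticipate any serious obstacle: the substantive content is already encapsulated in Proposition \ref{our_UFD-normal-th} and Theorem \ref{our_cor_UFD_cancel}, and the only verifications required are that the Gupta family $A$ meets the hypotheses of those results (domain, flat over $R$, residually normal) and that the obvious choice $(a, b) = (V, U)$ works in the easy direction. Both reductions are routine consequences of the construction of $A$.
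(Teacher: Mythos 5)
Your proposal is correct and is essentially the paper's own proof, which simply cites Proposition \ref{our_UFD-normal-th} together with (\cite{Neena_Threefold}, Lemma 3.2) for conditions (I)--(II) and Theorem \ref{our_cor_UFD_cancel} for condition (III); you merely spell out the routine verifications (domain, $R$-flatness, residual normality via the fibre condition, and the explicit witnesses $(a,b)=(V,U)$ for the converse, which is the same choice the paper records in its remark after Theorem \ref{our_cor_UFD_cancel}). One small point: irreducibility of $\pi^m Z - F(\pi,X,Y)$ does not follow merely from irreducibility of its reduction modulo $\pi$, but it does follow because the polynomial is linear in $Z$ and primitive over the UFD $R[X,Y]$ (since $\pi\nmid F(\pi,X,Y)$, as $F(0,X,Y)\neq 0$).
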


\begin{proof}
Follows from Proposition \ref{our_UFD-normal-th} \& (\cite{Neena_Threefold}, Lemma 3.2) and Theorem \ref{our_cor_UFD_cancel} \& (\cite{Neena_Threefold}, Theorem 4.2).
\end{proof}

In \cite{Neena_Threefold}, it has been observed that under the hypothesis  $\displaystyle \frac{k[X,Y]}{(F(0, X,Y))} = k^{[1]}$ the algebra $A$ in Corollary \ref{Our_Cor_Neena} is an $\mathbb{A}^2$-fibration over $R$ and vice-versa. Therefore, Corollary \ref{Our_Cor_Neena} states that a certain class of $\mathbb{A}^2$-fibration $A$ is trivial if the fibres of $B$ are $\A{2}$. From Theorem \ref{our_field-th}, we observe below that this phenomenon is true for any $\A{2}$-fibration over a Noetherian factorial domain containing $\mathbb{Q}$.

\begin{cor} \label{A2-fib-cor}
 Let $R$ be a Noetherian domain containing $\mathbb{Q}$ and $A$ an $\A{2}$-fibration over $R$ such that either $R$ is factorial or $\Omega_R(A)$ is a stably free $A$-module. Let $n \ge 2$. Suppose there exist $a,b \in A$ such that $\displaystyle B : = \frac{A[T]}{(bT^n-a)}$ satisfies $B \otimes_R k(P) = k(P)^{[2]}$ for all $P \in$ Spec$(R)$. Then $A = R[a]^{[1]} = R^{[2]}$ and $A[T] = R[bT^n-a, T]^{[1]}$.
 
\end{cor}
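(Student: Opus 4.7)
The plan is to follow the strategy of Proposition \ref{our_UFD-normal-th}---residue-field analysis via Theorem \ref{our_field-th}, then promotion of ``residual variable'' to ``variable'' via Theorem \ref{DD_res_lem-1}---together with an extra argument at primes where $b \in PA_P$. First, I observe that the hypotheses of Theorem \ref{DD_res_lem-1} are available both for $A$ (an $\A{2}$-fibration over $R$) and for $A[T]$ (an $\A{3}$-fibration over $R$): $A$ is automatically residually normal, since each fibre $A \otimes_R k(P) = k(P)^{[2]}$ is normal; and when $\Omega_R(A)$ is a stably free $A$-module, the split exact sequence $0 \to \Omega_R(A) \otimes_A A[T] \to \Omega_R(A[T]) \to A[T]\,dT \to 0$ shows that $\Omega_R(A[T])$ is a stably free $A[T]$-module as well.

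Next, I would fix $P \in$ Spec$(R)$ and write $\ol{a}, \ol{b}$ for the images of $a, b$ in $A \otimes_R k(P) = k(P)^{[2]}$. If $\ol{b} \ne 0$, then Theorem \ref{our_field-th} applied to the normal affine $k(P)$-domain $A \otimes_R k(P)$ supplies an element $X_P$ with $A \otimes_R k(P) = k(P)[X_P, \ol{a}]$ and $A[T] \otimes_R k(P) = k(P)[X_P, \ol{b}T^n - \ol{a}, T]$. If $\ol{b} = 0$, then $B \otimes_R k(P) = (k(P)^{[2]}/(\ol{a}))[T]$; combining the hypothesis $B \otimes_R k(P) = k(P)^{[2]}$ with the Abhyankar--Eakin--Heinzer cancellation theorem forces $k(P)^{[2]}/(\ol{a}) \cong k(P)^{[1]}$, after which the Abhyankar--Moh--Suzuki epimorphism theorem (available since $\mathbb{Q} \subseteq R$ gives char $k(P) = 0$) upgrades this to the statement that $\ol{a}$ is a variable of $k(P)^{[2]}$. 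Choosing a complementary variable $X_P$ and using $\ol{b}T^n - \ol{a} = -\ol{a}$ again yields $A \otimes_R k(P) = k(P)[X_P, \ol{a}]$ and $A[T] \otimes_R k(P) = k(P)[X_P, \ol{b}T^n - \ol{a}, T]$.

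In both cases $\ol{a}$ is a variable of the $\A{2}$-fibre and $(\overline{bT^n - a}, T)$ is a pair of variables of the $\A{3}$-fibre, so $a$ is a residual variable of $A$ over $R$ and $(bT^n - a, T)$ is a pair of residual variables of $A[T]$ over $R$. A double application of Theorem \ref{DD_res_lem-1}---first to the $\A{2}$-fibration $A$, then to the $\A{3}$-fibration $A[T]$---then delivers $A = R[a]^{[1]} = R^{[2]}$ and $A[T] = R[bT^n - a, T]^{[1]}$. I expect the principal obstacle to be the case $\ol{b} = 0$, where Theorem \ref{our_field-th} is not directly applicable; it is precisely at this step that the hypothesis $\mathbb{Q} \subseteq R$ becomes essential, so that the Abhyankar--Moh--Suzuki theorem is at one's disposal.
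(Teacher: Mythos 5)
Your proposal is correct and follows essentially the same route as the paper's own proof: fibrewise analysis splitting into the cases $\ol{b} \ne 0$ (handled by Theorem \ref{our_field-th}) and $\ol{b} = 0$ (handled by Abhyankar--Eakin--Heinzer cancellation plus the Abhyankar--Moh--Suzuki theorem, which is where $\mathbb{Q} \subseteq R$ is used), followed by the residual-variable criterion of Theorem \ref{DD_res_lem-1}. Your additional remark that stable freeness of $\Omega_R(A)$ passes to $\Omega_R(A[T])$ is a small useful point the paper leaves implicit.
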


\begin{proof}
Fix $P \in$ Spec($R$). Then $A \otimes_R k(P) = k(P)[X_P, Y_P]$ for some $X_P, Y_P \in A \otimes_R k(P)$. Let $\bar{a}$ and $\bar{b}$ respectively be the images of $a$ and $b$ in $A \otimes_R k(P)$. Suppose $\bar{b} \ne 0$ in $A \otimes_R k(P)$. Since $\displaystyle \frac{(A \otimes_R k(P))[T]}{(\bar{b}T^n -\bar{a})} = \frac{k(P)[X_P, Y_P][T]}{(\bar{b}T^n-\bar{a})} = k(P)^{[2]}$, by Theorem \ref{our_field-th}, we get $A \otimes_R k(P) = (R[a] \otimes_R k(P))^{[1]}$ and $A[T] \otimes_R k(P) = (R[bT^n-a, T] \otimes_R k(P))^{[1]}$. Now suppose $\bar{b} = 0$ in $A \otimes_R k(P)$. Then $\displaystyle \frac{(A \otimes_R k(P))[T]}{(\bar{b}T^n -\bar{a})} = \frac{k(P)[X_P, Y_P][T]}{(\bar{b}T^n-\bar{a})} = \left(\frac{k(P)[X_P, Y_P]}{(\bar{a})}\right)^{[1]} = k(P)^{[2]}$ and hence by (\cite{AEH_Coff}, Theorem 3.3) $\displaystyle \frac{k(P)[X_P, Y_P]}{(\bar{a})} = k(P)^{[1]}$. Since the characteristic of $k(P)$ is 0, by Abhyankar-Moh-Suzuki Epimorphism Theorem (\cite{AM_Epi}, \cite{SU_Epi}), we get $A \otimes_R k(P) = k(P)[X_p, Y_P] = k(P)[\bar{a}]^{[1]} = (R[a] \otimes_R k(P))^{[1]}$; and hence $A[T] \otimes_R k(P) = k(P)[\bar{a},T]^{[1]} = k(P)[\bar{b}T^n-\bar{a}, T]^{[1]} = (R[bT^n-a, T] \otimes_R k(P))^{[1]}$.

\medskip

Since $P \in$ Spec($R$) is arbitrary, using Theorem \ref{DD_res_lem-1} we get $A = R[a]^{[1]}=R^{[2]}$ and $A[T] = R[bT^n-a, T]^{[1]}$. This completes the proof.
\end{proof}

The following example by S.M. Bhatwadekar shows that the condition ``$A$ is a normal $k$-domain'' is necessary for Theorem \ref{our_field-th}.

\begin{ex} \label{example-1}
Let $B = k[X,Y] = k^{[2]}$ and $I = (X^2, Y-1)$ be an ideal of $B$. Let $A = k +I$. Then $B$ is a finite  birational extension of $A$ and the conductor of $B$ over $A$  is $I$. Let $F = X^2 T^2 - Y \in A[T]$. 

\medskip

Note that $F$ is a prime element of $B[T]$  such that $B[T]/FB[T] = k[X,T]$. Moreover, $F +1 = X^2T^2 - (Y-1) \in IA[T]$. From this it follows that $A[T] + FB[T] = B[T]$ as $IB[T] = IA[T] $ and $FA[T] = FB[T] \cap A[T]$.

\medskip

Thus $A[T]/FA[T] = B[T]/FB[T] = k[X,T] = k^{[2]}$.
\end{ex}

The next example shows the necessity of the hypothesis ``$b \notin PA_P$ for all $P \in$ Spec($R$)'' in Proposition \ref{our_UFD-normal-th}.
\begin{ex} \label{example-2}
Let $(R, \pi)$ be a DVR and $\displaystyle A = \frac{R[X,Y,Z]}{(\pi^m Z+(X-1)Y-1)}$ where $m \ge 1$. Let $x$, $y$ and $z$ respectively denote the images of $X$, $Y$ and $Z$ in $A$. Set $\displaystyle B := \frac{A[T]}{(\pi y T^n -x)}$ where $n \ge 1$. We claim that $B = A[t] = R[t]^{[1]}$, where $t$ is the image of $T$ in $B$.

\medskip

Note that $\pi \in R$ is prime in both $R[t]$ and $A[t]$, 

$$\displaystyle \frac{A[t]}{\pi A[t]} = \frac{R}{\pi R}[t, z] = \left( \frac{R[t]}{\pi R[t]} \right)^{[1]} ~\, \mbox{and} \, ~~ A[t]\left[\frac{1}{\pi}\right] = R[t]\left[\frac{1}{\pi}\right][y] = \left( R[t]\left[\frac{1}{\pi}\right]\right)^{[1]}.$$ 

Hence, by a version of the Russell-Sathaye criterion (\cite{RS_FIND}, Theorem 2.3.1) for a ring to be a polynomial algebra over a subring (\cite{BD_DVR}, Theorem 2.6) we get $A[t] = R[t]^{[1]} = R^{[2]}$. But $A \ne R^{[2]}$, since $A/\pi A \neq (R/\pi R)^{[2]}$.
\end{ex}

 \begin{rem}
 Let $R$ be a Noetherian domain and $A = R[X,Y]$. Suppose $a, b \in A$ be such that $\displaystyle \frac{A[T]}{(bT-a)} = R^{[2]}$.

\item [\rm (I)]
 When $b \notin PA_P$ for all $P \in $ Spec($R$), then by the contributions of Sathaye (\cite{Sathaye_OLP}, Theorem) and Russell (\cite{R_BIR}, Theorem 2.3), and by a result on residual variables by Das-Dutta (\cite{DD_residual}, Theorem 3.13) it can be seen that $A[T]$ is a stably polynomial algebra over $R[bT-a]$. Further, when $R$ is a Dedekind domain, it is known that $A[T]$, in fact, is a polynomial ring in two variables over $R[bT-a]$ (\cite{BD_DVR} and \cite{DD_plane}, Theorem 3.2); but it is not known whether $A[T] = R[bT-a]^{[2]}$ holds in general.
 
 \item [\rm (II)]
 When $b \in PA_P$ for some $P \in$ Spec($R$), it is not known whether $A[T] = R[bT-a]^{[2]}$ even if $R$ is a DVR. We quote below a concrete example by Bhatwadekar-Dutta (\cite{BD_AFNFIB}, Example 4.13 and \cite{Amartya-Neena_Epi}, Example 4.7). Let $(R,\pi)$ be a DVR containing $\mathbb{Q}$ and $F = \pi Y^2 T + X + \pi X (Y + Y^2) + \pi^2 Y \in A[T]$.  Then it can be seen that $A[T]^{[1]} = R[F]^{[3]}$, $A[T]/(F) = R^{[2]}$, $A[T][1/\pi] = R[1/\pi][Y,F]^{[1]}$ and $ A[T] \otimes_R R/\pi R = (R/\pi R)[\bar{F}]^{[2]}$ where $\bar{F}$ is the image of $F$ in $A[T] \otimes_R R/\pi R$; but it is not known whether $A[T]= R[F]^{[2]}$. 
\end{rem}

\noindent
{\bf Acknowledgement:} The author thanks Amartya K. Dutta for going through the draft carefully and pointing out some mistakes, and also thanks Neena Gupta for her suggestions and for formulation of Example \ref{example-2}.

\bibliographystyle{amsplain}
\bibliography{reference}
\end{document}